\documentclass{article}
\usepackage{graphicx} 
\usepackage{amsfonts}
\usepackage{amsmath}
\usepackage{authblk}
\usepackage{url}

\usepackage{amsmath, amsthm, amssymb}
\newtheorem{theorem}{Theorem}[section]

\title{On the maximal measure of a spherical set avoiding solutions to $x+y+z = 0$}
\author[1]{Ákos Dúcz}

\affil[1]{HUN-REN Alfréd Rényi Institute of Mathematics, Budapest, Hungary}

\date{June 2026}

\begin{document}

\maketitle

\begin{abstract}
    We prove that the maximal normalized surface measure of a spherical set in $d$ dimensions avoiding solutions to $x+y+z = 0$ approaches $1/2$ as $d$ goes to infinity. This gives a partial answer to a question of Bukh, who conjectured $1/2$ to be the optimal bound for all $d \geq 3$ \cite{BukhProblems}.
\end{abstract}

\section{Proof of the main theorem}

First, we need the following generalization of Mantel's famous theorem \cite{Mantel1907}: 

\begin{theorem}[\cite{BrownHarary1970ExtremalDigraphs}]
Let \(D\) be a loopless digraph on \(n\) vertices, with no parallel arcs in the same direction but with opposite arcs allowed. If \(D\) contains no directed triangle, then
\[
|E(D)|\le \left\lfloor \frac{n^2}{2}\right\rfloor ,
\]
where $E(D)$ denotes the set of arcs of $D$.
\end{theorem}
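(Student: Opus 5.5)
We argue by strong induction on $n$, with the removal of a doubly joined pair of vertices as the inductive step. The base cases are straightforward. For $n\le 1$ there are no arcs. For $n=2$ there are at most $2=\lfloor 4/2\rfloor$ arcs, namely one in each direction.

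For the inductive step, let $n\ge 3$ and let $D$ be a digraph on $n$ vertices with no directed triangle. Call an unordered pair $\{u,v\}$ \emph{double} if both $u\to v$ and $v\to u$ are arcs. If $D$ has no double pair, then each unordered pair carries at most one arc. Hence $|E(D)|\le \binom{n}{2}\le \lfloor n^2/2\rfloor$ and we are done.

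Otherwise, fix a double pair $\{u,v\}$ and let $D'=D-\{u,v\}$. Then $D'$ has no directed triangle on $n-2$ vertices, so by induction $|E(D')|\le \lfloor (n-2)^2/2\rfloor$. The arcs of $D$ not in $D'$ are of two kinds:
\begin{itemize}
\item the $2$ arcs between $u$ and $v$;
\item for each of the $n-2$ vertices $w\ne u,v$, the arcs between $w$ and $\{u,v\}$.
\end{itemize}

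The key claim is that each such $w$ sends or receives at most $2$ arcs to or from $\{u,v\}$. Suppose instead there are at least $3$. Then $w$ forms a double pair with one of $u,v$, say $u$ by symmetry, and has at least one arc to or from $v$. There are two possibilities:
\begin{itemize}
\item If $v\to w$, then $v\to w\to u\to v$ is a directed triangle.
\item If $w\to v$, then $w\to v\to u\to w$ is a directed triangle.
\end{itemize}
Both use the fact that $\{u,v\}$ and $\{u,w\}$ are double. Either way we have a contradiction, so the claim holds.

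Consequently,
\[
|E(D)|\le \left\lfloor \frac{(n-2)^2}{2}\right\rfloor + 2 + 2(n-2) = \left\lfloor \frac{(n-2)^2}{2} + 2n-2\right\rfloor = \left\lfloor \frac{n^2}{2}\right\rfloor .
\]
Here we used that $2n-2$ is an integer and that $(n-2)^2/2+2n-2=n^2/2$. This completes the induction.

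The only step requiring care is the local claim that each outside vertex contributes at most $2$ arcs. The choice to delete a \emph{double} pair, rather than an arbitrary edge as in the classical proof of Mantel's theorem, is exactly what makes this claim hold. The double pair supplies the return arcs needed to close a directed triangle whenever a third arc is present.
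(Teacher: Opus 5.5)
Your proof is correct, and it takes a genuinely different route from the paper.

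The paper deletes a \emph{single} vertex per inductive step. It first shows that some vertex has total degree $t(v)=d^+(v)+d^-(v)\le n$, and at most $n-1$ for odd $n$. This uses three ingredients: the inequality $d^-(x)+d^+(y)\le n$ along each arc $x\to y$; an outdegree-monotonicity argument that forces acyclicity when all total degrees exceed $n$; and, for odd $n$, a parity argument that applies Mantel's theorem to the bidirected graph inside each class of equal outdegree.

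You instead delete a $2$-cycle, which is the directed analogue of the classical edge-deletion proof of Mantel's theorem. The local fact that a third arc between $w$ and $\{u,v\}$ closes a directed triangle is immediate. The case with no $2$-cycle is trivial, since $D$ is then an oriented graph with at most $\binom n2$ arcs. Parity is absorbed by the identity $\lfloor (n-2)^2/2\rfloor+2n-2=\lfloor n^2/2\rfloor$. This makes your argument shorter and self-contained, with no appeal to Mantel's theorem.

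The paper's minimum-degree claim adds nothing beyond the theorem itself. Averaging gives $\min_v t(v)\le 2\lfloor n^2/2\rfloor/n$, which equals $n$ for even $n$ and $n-1/n$ for odd $n$, so your proof recovers that claim for free. What the paper's route does produce along the way is some structure: when all total degrees are at least $n$, outdegree is non-increasing along arcs, and arcs within an equal-outdegree class are bidirected. That could matter for characterizing extremal digraphs, but it is not needed for the bound.
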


For the sake of completeness, a proof of this theorem is provided in the appendix.
Let us now prove our main theorem:

\begin{theorem}
Let $d\ge 3$, and let $A\subset S^{d-1}$ be measurable. Suppose that there are
no $x,y,z\in A$ such that
\[
    x+y+z=0.
\]
Then
\[
    \sigma(A)
    \le
    \frac{\left\lfloor d^2/2\right\rfloor}{d(d-1)},
\]
where $\sigma$ denotes normalized surface measure on $S^{d-1}$.
In particular,
\[
    \sigma(A)\le \frac{d}{2(d-1)}.
\]
\end{theorem}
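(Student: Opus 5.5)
The plan is an averaging argument: place a rigid configuration of unit vectors on $S^{d-1}$, apply a uniformly random rotation, and bound how many of the rotated points can land in $A$ using the theorem of Brown and Harary. The configuration should be indexed by the arcs of the complete digraph on $d$ vertices, with every directed triangle giving a solution of $x+y+z=0$.

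\textbf{The configuration.} Let $e_1,\dots,e_d$ be the standard basis of $\mathbb{R}^d$. For each ordered pair $(i,j)$ with $i\neq j$, set
\[
x_{ij}=\frac{e_i-e_j}{\sqrt 2}\in S^{d-1}.
\]
These are $d(d-1)$ distinct unit vectors. For any directed triangle $i\to j\to k\to i$ with $i,j,k$ distinct,
\[
x_{ij}+x_{jk}+x_{ki}=\frac{(e_i-e_j)+(e_j-e_k)+(e_k-e_i)}{\sqrt2}=0 .
\]
This identity is preserved by any orthogonal map $g$.

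\textbf{The digraph for a fixed rotation.} Fix $g\in SO(d)$ and let $D_g$ be the digraph on $\{1,\dots,d\}$ with arc set
\[
\{(i,j): i\neq j,\ g x_{ij}\in A\}.
\]
This digraph is loopless and has no parallel arcs in the same direction. Opposite arcs may occur, which the theorem of Brown and Harary allows. If $D_g$ contained a directed triangle $i\to j\to k\to i$, then $x=gx_{ij}$, $y=gx_{jk}$, $z=gx_{ki}$ would all lie in $A$ and satisfy $x+y+z=0$, contradicting the hypothesis. Therefore the theorem of Brown and Harary gives
\[
|E(D_g)|\le \lfloor d^2/2\rfloor \quad\text{for every } g .
\]

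\textbf{Averaging.} Now choose $g$ according to the Haar probability measure on $SO(d)$. For each fixed unit vector $v$, the point $gv$ is uniformly distributed on $S^{d-1}$, so $\Pr[gv\in A]=\sigma(A)$. Writing $|E(D_g)|$ as a sum of indicator functions and using linearity of expectation,
\[
\mathbb{E}\,|E(D_g)|=\sum_{i\neq j}\Pr[g x_{ij}\in A]=d(d-1)\,\sigma(A).
\]
Combining this with the pointwise bound gives $d(d-1)\sigma(A)\le\lfloor d^2/2\rfloor$, which is the first claim. The second claim follows from $\lfloor d^2/2\rfloor\le d^2/2$.

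The only step that needs some care is measurability. The set of $g$ with $gx_{ij}\in A$ is measurable, because it is the preimage of $A$ under the continuous map $g\mapsto gx_{ij}$. Hence $|E(D_g)|$ is a measurable, bounded function of $g$, and the expectation above is legitimate.

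The main conceptual point is finding a configuration in which every directed triangle of the complete digraph is a solution of $x+y+z=0$. The vectors $(e_i-e_j)/\sqrt2$ do this directly. Once the configuration is in place, the rest is routine.
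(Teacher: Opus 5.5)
Your proof is correct and follows the paper's argument essentially verbatim: the same configuration $(e_i-e_j)/\sqrt2$ indexed by the arcs of the complete digraph, the Brown--Harary bound applied to $D_g$ for every rotation, and Haar averaging to get $d(d-1)\sigma(A)\le\lfloor d^2/2\rfloor$. Your measurability remark is something the paper omits. Strictly, the preimage argument needs $A$ to be Borel, but you can always pass to a Borel subset of $A$ with the same measure, and that subset still avoids solutions.
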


\begin{proof}
Let $e_1,\dots,e_d$ be the standard basis of $\mathbb R^d$, and consider the
set
\[
    V=\left\{ \frac{e_i-e_j}{\sqrt 2} : i,j\in\{1,\dots,d\},\ i\ne j \right\}
    \subset S^{d-1}.
\]
We think of the point $(e_i-e_j)/\sqrt2$ as the directed edge $i\to j$.

Let $U$ be a random element of $SO(d)$, chosen according to Haar probability
measure. For each such $U$, define a directed graph $D_U$ on vertex set
$\{1,\dots,d\}$ by putting
\[
    i\to j \in E(D_U)
    \quad\Longleftrightarrow\quad
    U\left(\frac{e_i-e_j}{\sqrt2}\right)\in A.
\]

We claim that $D_U$ contains no directed triangle. Indeed, if
\[
    i\to j,\qquad j\to k,\qquad k\to i
\]
were a directed triangle in $D_U$, then the three corresponding points of $A$
would be
\[
    U\left(\frac{e_i-e_j}{\sqrt2}\right),\qquad
    U\left(\frac{e_j-e_k}{\sqrt2}\right),\qquad
    U\left(\frac{e_k-e_i}{\sqrt2}\right).
\]
Their sum is
\[
    U\left(
        \frac{e_i-e_j+e_j-e_k+e_k-e_i}{\sqrt2}
    \right)
    =0,
\]
contradicting the assumption on $A$.

Therefore, by Theorem 1.1,
\[
    |E(D_U)|\le \left\lfloor \frac{d^2}{2}\right\rfloor
\]
for every $U\in SO(d)$.

On the other hand, averaging over all rotations gives
\[
    \mathbb E_U |E(D_U)|
    =
    \sum_{i\ne j}
    \mathbb P\left[
        U\left(\frac{e_i-e_j}{\sqrt2}\right)\in A
    \right].
\]
For each fixed $i\ne j$, the point
\[
    U\left(\frac{e_i-e_j}{\sqrt2}\right)
\]
is uniformly distributed on $S^{d-1}$. Hence each summand is equal to
$\sigma(A)$, and since there are $d(d-1)$ ordered pairs $(i,j)$ with $i\ne j$,
we obtain
\[
    \mathbb E_U |E(D_U)| = d(d-1)\sigma(A).
\]
Combining this with the pointwise upper bound on $|E(D_U)|$ gives
\[
    d(d-1)\sigma(A)
    \le
    \left\lfloor \frac{d^2}{2}\right\rfloor .
\]
Thus
\[
    \sigma(A)
    \le
    \frac{\left\lfloor d^2/2\right\rfloor}{d(d-1)}.
\]
Finally,
\[
    \frac{\left\lfloor d^2/2\right\rfloor}{d(d-1)}
    \le
    \frac{d^2/2}{d(d-1)}
    =
    \frac{d}{2(d-1)}.
\]
This completes the proof.
\end{proof}

We also note that $\sigma(A) = 1/2$ is trivially achievable in all dimensions, therefore Theorem 1.2 is asymptotically sharp.

\section{Acknowledgements}

The author was supported by grant NKFIH-153165.

\bibliographystyle{alpha}
\bibliography{ref}

\section{Appendix}

\begin{proof}[Proof of Theorem 1.1]
Write
\[
d^+(v)=|\{u:v\to u\}|,\qquad d^-(v)=|\{u:u\to v\}|,
\qquad t(v)=d^+(v)+d^-(v).
\]
We first prove the following claim.

\medskip
\noindent
\textbf{Claim.} There is a vertex \(v\) such that
\[
t(v)\le 
\begin{cases}
n, & n \text{ even},\\
n-1, & n \text{ odd}.
\end{cases}
\]

Indeed, since \(D\) has no directed triangle, for every arc \(x\to y\) we have
\[
N^-(x)\cap N^+(y)=\varnothing,
\]
because any \(z\in N^-(x)\cap N^+(y)\) would give
\[
z\to x\to y\to z.
\]
Hence
\[
d^-(x)+d^+(y)\le n.
\tag{1}
\]

Suppose first that \(t(v)\ge n+1\) for every vertex \(v\). If \(x\to y\), then by (1),
\[
d^+(x)=t(x)-d^-(x)\ge n+1-d^-(x)>d^+(y).
\]
Thus along every arc the outdegree strictly decreases. This is impossible on a directed cycle, and in particular forbids two-cycles. Hence \(D\) is acyclic, so
\[
|E(D)|\le \binom n2.
\]
But \(t(v)\ge n+1\) for every \(v\) gives
\[
2|E(D)|=\sum_v t(v)\ge n(n+1),
\]
so
\[
|E(D)|\ge \frac{n(n+1)}2>\binom n2,
\]
a contradiction. Therefore some vertex has \(t(v)\le n\).

It remains only to strengthen this by one when \(n\) is odd. Suppose, for contradiction, that \(n\) is odd and
\[
t(v)\ge n
\]
for every vertex \(v\). Then for every arc \(x\to y\), (1) gives
\[
d^+(x)=t(x)-d^-(x)\ge n-d^-(x)\ge d^+(y).
\tag{2}
\]
So arcs never go from smaller outdegree to larger outdegree.

Partition \(V(D)\) into classes of equal outdegree. Consider one such class \(C\), with \(|C|=s\). If \(x,y\in C\) and \(x\to y\), then \(d^+(x)=d^+(y)\), so equality holds in (1). Hence \(d^-(x)+d^+(y)=n\). If \(y\to x\) were missing, then both \(x\) and \(y\) would be missing from
\[
N^-(x)\cup N^+(y),
\]
contradicting \(d^-(x)+d^+(y)=n\) and \(N^-(x)\cap N^+(y)=\varnothing\). Thus every arc inside \(C\) is bidirected.

Let \(H\) be the ordinary graph on \(C\) whose edges are these bidirected pairs. Since a triangle in \(H\) would give a directed triangle in \(D\), the graph \(H\) is triangle-free.

For every \(v\in C\), vertices outside \(C\) contribute at most \(n-s\) to \(t(v)\), while each neighbor in \(H\) contributes \(2\). Therefore
\[
t(v)\le n-s+2d_H(v).
\]
Since \(t(v)\ge n\), we get
\[
d_H(v)\ge \frac{s}{2}
\]
for every \(v\in C\).

But \(H\) is triangle-free. By Mantel's theorem \cite{Mantel1907},
\[
e(H)\le \frac{s^2}{4}.
\]
On the other hand,
\[
e(H)=\frac12\sum_{v\in C}d_H(v)\ge \frac12\cdot s\cdot \frac{s}{2}
=\frac{s^2}{4}.
\]
Thus equality holds. In particular \(s\) must be even; if \(s\) were odd, then \(d_H(v)\ge \lceil s/2\rceil\) for all \(v\), giving \(e(H)>s^2/4\), impossible.

So every equal-outdegree class has even size. Their sizes sum to \(n\), contradicting that \(n\) is odd. Hence, when \(n\) is odd, some vertex satisfies \(t(v)\le n-1\). This proves the claim.

\medskip

We now prove the theorem by induction on \(n\). The case \(n=1\) is trivial. Let \(v\) be a vertex given by the claim. Then \(D-v\) is also directed-triangle-free, so by induction,
\[
|E(D-v)|\le \left\lfloor \frac{(n-1)^2}{2}\right\rfloor .
\]
If \(n\) is even, then \(t(v)\le n\), and therefore
\[
|E(D)|\le \left\lfloor \frac{(n-1)^2}{2}\right\rfloor+n
=\frac{n^2}{2}.
\]
If \(n\) is odd, then \(t(v)\le n-1\), and therefore
\[
|E(D)|\le \left\lfloor \frac{(n-1)^2}{2}\right\rfloor+n-1
=\left\lfloor \frac{n^2}{2}\right\rfloor .
\]
Thus in all cases,
\[
|E(D)|\le \left\lfloor \frac{n^2}{2}\right\rfloor .
\]
\end{proof}

\end{document}